\documentclass[12pt]{article}
\usepackage{amssymb,amsmath,latexsym}
\usepackage{amsthm}

% Page length commands go here in the preamble
\setlength{\oddsidemargin}{-0.25in} % Left margin of 1 in + 0 in = 1 in
\setlength{\textwidth}{7in}   % Right margin of 8.5 in - 1 in - 6.5 in = 1 in
\setlength{\topmargin}{-.75in}  % Top margin of 2 in -0.75 in = 1 in
\setlength{\textheight}{9.2in}  % Lower margin of 11 in - 9 in - 1 in = 1 in

\newtheorem{theorem}{Theorem}
\newtheorem{definition}{Definition}
\newtheorem{lemma}{Lemma}

\DeclareMathOperator{\ind}{ind}
\DeclareMathOperator{\order}{order}

 % 1.5 denotes double spacing. Changing it will change the spacing

\setlength{\parindent}{0in} 
\begin{document}
\title{Geometric representation of 3-restricted min-wise independent sets of permutations}
\author{Victor Bargachev}
\date{\today}
\maketitle

\abstract{
	We study properties of 3-restricted min-wise independent sets of permutations and prove 
	that the class of these sets is isomorphic to the class of subsets of vertices of multi-dimentional hypercube 
	of certain size
	with certain pair-wise distances fixed.
}

\newcommand{\m}[1]{{m\over {#1}}}

\section{Definitions and Main Theorem}

	Throughout the paper we use the following notation. Let $[n]=\{0,\dots,n-1\}$ and $S_n$ denote symmertic 
group acting on $[n]$.
	Let $\ind()$ denote truth indicator, i.e. $\ind(0<1)=1$ and $\ind(0>1)=0$. 
	Let $B=\{0,1\}$ and $B^n$ is a set of vertices of $n$-dimensional hypercube, or similary set 
of binary vectors of length $n$. For $\pi\in S_n$ we write it down as $\underline{\pi(0)\pi(1)\dots\pi(n-1)}$,
 i.e. $\pi=\underline{120}$ means that $\pi(0)=1$, $\pi(1)=2$ and $\pi(2)=0$.
	Let $\#$ denote a size of set, e.g. $\#[n]=n$.

\begin{definition}{\cite{broder}}
	A set of permutations $S\subset S_n$ is called 3-restricted min-wise-independent (3-mwi) if 
	$\forall X\subset [n]: |X|\le 3$ and $\forall x\in X$
		$$
			 \#\{\pi\in S: \pi(x)=\min(\pi(X))\} = {|S|\over |X|}.
		$$
\end{definition}

It is often more handy to use the equivalent definition.

\begin{lemma}\label{lemmaXYZ}
	A set of permutations $S\subset S_n$ is 3-mwi if and only if for all distinct $x,y,z\in [n]$
	$$
		\#\{\pi\in S: \pi(x)<\pi(y)<\pi(z)\} = {|S|\over 6}.
	$$
\end{lemma}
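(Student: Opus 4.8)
The plan is to reduce both implications to elementary counting over the six linear orderings of a triple $\{x,y,z\}\subseteq[n]$ of distinct elements. For such a triple and a linear order $\sigma$ of it, let $N_\sigma$ denote the number of $\pi\in S$ that arrange $x,y,z$ in the order $\sigma$; the six numbers $N_\sigma$ partition $S$, and the quantity in the statement is one particular $N_\sigma$. For the ``only if'' direction, assume $S$ is 3-mwi and fix distinct $x,y,z$. The definition with $X=\{x,y\}$ gives $\#\{\pi\in S:\pi(x)<\pi(y)\}=|S|/2$ (the event $\pi(x)=\min(\pi(X))$ being $\pi(x)<\pi(y)$ since $\pi$ is injective), and the orderings of $\{x,y,z\}$ that put $x$ before $y$ are exactly $xyz$, $xzy$, $zxy$, so $N_{xyz}+N_{xzy}+N_{zxy}=|S|/2$. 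The definition with $X=\{x,y,z\}$ and element $x$ gives $\#\{\pi\in S:\pi(x)=\min(\pi(X))\}=|S|/3$, and the orderings putting $x$ first are $xyz$ and $xzy$, so $N_{xyz}+N_{xzy}=|S|/3$. Subtracting, $N_{zxy}=|S|/6$; since $x,y,z$ were arbitrary distinct elements, relabelling gives $N_\sigma=|S|/6$ for every ordering of every triple, and in particular $\#\{\pi\in S:\pi(x)<\pi(y)<\pi(z)\}=|S|/6$.

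For the ``if'' direction, assume $N_\sigma=|S|/6$ for all triples and all $\sigma$, and check the definition for each $X$ with $|X|\le 3$. The case $|X|=1$ is immediate (both sides equal $|S|$). For $|X|=3$, say $X=\{x,y,z\}$, the event $\pi(x)=\min(\pi(X))$ is the disjoint union of the orderings $xyz$ and $xzy$, so its count is $|S|/6+|S|/6=|S|/3$, and likewise for $y$ and $z$. For $|X|=2$, say $X=\{x,y\}$, pick any $z\in[n]\setminus\{x,y\}$ and partition $\{\pi\in S:\pi(x)<\pi(y)\}$ according to the position of $\pi(z)$ into the three orderings $zxy$, $xzy$, $xyz$, each of count $|S|/6$, for a total of $|S|/2$, and symmetrically for $y$. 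This last step is the only place a third coordinate is used, so here one assumes $n\ge 3$; for $n\le 2$ the right-hand condition is vacuous and the equivalence is read under that standing assumption.

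I expect no real obstacle: the argument is a short exercise in inclusion over the six elements of $S_3$ acting on $\{x,y,z\}$. The only points needing attention are correctly listing which orderings feed each ``$\min$'' or ``$<$'' event, and noticing that the ``only if'' direction uses just the $|X|\le 2$ and $|X|=3$ ``minimum'' instances of the definition, while reducing the $|X|=2$ case in the ``if'' direction is what forces $n\ge 3$.
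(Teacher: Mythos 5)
Your proof is correct and follows essentially the same route as the paper: both derive the forward direction by subtracting the ``$x$ is the minimum of the triple'' count ($|S|/3$, two orderings) from the ``$x$ precedes $y$'' count ($|S|/2$, three orderings) to isolate a single ordering of weight $|S|/6$, and both treat the converse as a routine partition of the relevant events into orderings of a triple. Your version is slightly more careful than the paper's (which dismisses the converse as obvious), in particular in noting that the $|X|=2$ case of the converse requires $n\ge 3$.
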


\begin{proof}
	Suppose $S$ is 3-mwi, let $|S|=m$ and let $f(u,v,w) := \#\{\pi\in S: \pi(u)<\pi(v)<\pi(w)\}$. Then 
		$\m2 = \#\{\pi\in S: \pi(y)<\pi(z)\} = f(x,y,z) + f(y,x,z) + f(y,z,x)$, 
		$\m3 = \#\{\pi\in S: \pi(y)<\pi(x) \wedge \pi(y)<\pi(z)\} = f(y,x,z) + f(y,z,x)$
	and hence
		$f(x,y,z) = \m2-\m3=\m6$. The back implication is obvious.
\end{proof}

\newcommand{\vxy}{v_{xy}}
\newcommand{\vyx}{v_{yx}}
\newcommand{\vxz}{v_{xz}}
\newcommand{\vyz}{v_{yz}}

Throughout the paper $S$ will denote a set of permutations of order $n$ and $m$ will denote its size: 
	$S=\{\pi_0,\dots,\pi_{m-1}\}\subset S_n$.
Now let $x,y\in [n]$ be distinct and let us define a binary vector $v_{xy}\in B^m$:
	$$
		(\vxy)_i := \ind(\pi_i(x) < \pi_i(y)).
	$$

If $S$ is 3-mwi then from definition is follows that 
	$$
		\vxy^2 = \sum_{i\in[m]} \ind(\pi_i(x) < \pi_i(y)) = \#\{i\in[m]: \pi_i(x) = \min \pi_i(\{x, y\}) \} = {m\over 2}.
	$$
Similarly, for all distinct $x,y,z\in [n]$ we have
	$$
		\vxy\cdot \vxz = \m3 \text{, } 
		\vxy\cdot \vyz = \m6 \text{ and }
		\vxy\cdot \vyx = 0.
	$$

\begin{definition}
	Let $I=\{(x,y): x,y\in[n], x \neq y\}$ be a sequence of pairs. 
	We say that set of vectors $V=\{\vxy\in B^m: (x,y)\in I\}$ indexed with $I$ is 3-mwi if for all distinct $x, y, z\in [n]$
	$$
		v_{xy}^2 = \m2,\ 
		v_{xy}\cdot v_{yx} = 0,\ 
		v_{xy}\cdot v_{xz} = \m3,\ 
		v_{xy}\cdot v_{yz} = \m6.
	$$
	For $i\in [m]$ and $x\in [n]$ let us define
	\begin{equation}
		\label{orderDefinition}
		\order_V(i, x) := 
			\#\{ y : y \neq x, (v_{xy})_i = 0 \}
		= n-1-\sum_{y\neq x} (v_{xy})_i.
	\end{equation}
\end{definition}

Above we have already shown how to construct 3-mwi set of vectors $V$ from 3-mwi set of permutations $S$. Moreover,
from a construction it can be seen that $\forall i,x\ \pi_i(x) = \order_V(i, x)$ and hence $S$ can be restored from $V$. 
What we are going to prove is that every 3-mwi set of vectors corresponds to some 3-mwi set of permutaions:

\begin{theorem}\label{mainTheorem}
	Let $V\subset B^m$ be a 3-mwi set of vectors. 
	Then $\forall i\in[m]$ and 
		$\forall x,y\in [n], x\ne y$ we have $\order_V(i,x)\ne \order_V(i,y)$ and
	\begin{equation}
		\label{piDefinition}
		S=\{\pi_i: i\in[m]\}\subset S_n \text{ where } \pi_i(x) := \order_V(i,x)
	\end{equation}
	is 3-mwi set of permutations.
\end{theorem}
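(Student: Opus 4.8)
The plan is to fix an index $i\in[m]$, encode the bits $(\vxy)_i$ as a tournament $T_i$ on the vertex set $[n]$, and reduce the whole theorem to showing that every $T_i$ is a linear (transitive) tournament. First I would note that the identities $\vxy^2=\vyx^2=\m2$ and $\vxy\cdot\vyx=0$ force $v_{yx}$ to be the bitwise complement of $\vxy$, so $(\vxy)_i+(\vyx)_i=1$ for every $i$ and every pair of distinct $x,y$. Hence, with $i$ fixed, declaring $x\to_i y$ exactly when $(\vxy)_i=1$ defines a tournament $T_i$ on $[n]$, and by \eqref{orderDefinition} the number $\order_V(i,x)$ is precisely the in-degree of $x$ in $T_i$.

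The core step is to show that no $T_i$ contains a directed $3$-cycle. Fix distinct $x,y,z\in[n]$; for each $i$ the restriction $T_i|_{\{x,y,z\}}$ is either transitive (one of the $6$ linear orders on a triple) or one of the $2$ directed $3$-cycles. I would sum the six hypotheses $v_{uv}\cdot v_{vw}=\m6$ over the six orderings $(u,v,w)$ of $(x,y,z)$. Interchanging the two summations, the left-hand side equals $\sum_{i\in[m]}\#\{(u,v,w):u\to_i v\to_i w\}$, and an elementary case check shows the inner count is $1$ when $T_i|_{\{x,y,z\}}$ is transitive and $3$ when it is a $3$-cycle. Writing $c$ for the number of indices $i$ with $T_i|_{\{x,y,z\}}$ cyclic and $t=m-c$ for the rest, this yields $t+3c=6\cdot\m6=m$, which together with $t+c=m$ forces $c=0$. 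Since the triple was arbitrary, no $T_i$ has a directed $3$-cycle; and a $3$-cycle-free tournament is a strict total order, because $a\to b$ and $b\to c$ must imply $a\to c$ (else $a\to b\to c\to a$ is a $3$-cycle). I expect this counting argument — in particular realizing that one should sum over all six orderings and that cyclic triples are counted with multiplicity $3$ — to be the main obstacle; everything else is bookkeeping.

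It remains to read off the consequences. Listing $[n]$ as $p_0,\dots,p_{n-1}$ with $p_j\to_i p_k$ for $j<k$, the in-degree of $p_j$ is $j$, so the in-degree function is a bijection $[n]\to\{0,\dots,n-1\}=[n]$; in particular $\order_V(i,x)\ne\order_V(i,y)$ whenever $x\ne y$, so every $\pi_i$ lies in $S_n$. Moreover the vertex of smaller in-degree dominates the one of larger in-degree, so $(\vxy)_i=1$ if and only if $\order_V(i,x)<\order_V(i,y)$, i.e.\ if and only if $\pi_i(x)<\pi_i(y)$. Therefore, for any distinct $x,y,z$,
\[
  \#\{i\in[m]:\pi_i(x)<\pi_i(y)<\pi_i(z)\}=\#\{i\in[m]:(\vxy)_i=1,\ (\vyz)_i=1\}=\vxy\cdot\vyz=\m6 .
\]
By Lemma~\ref{lemmaXYZ} (with $\#\{\pi\in S:\cdots\}$ read as a count over the index set $[m]$, so that possible coincidences among the $\pi_i$ do not matter) this is exactly the condition for $S=\{\pi_i:i\in[m]\}$ to be $3$-mwi, which completes the proof.
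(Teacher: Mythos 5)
Your proof is correct, and its outer skeleton matches the paper's (a per-index relation on $[n]$ read off from the bits $(\vxy)_i$, shown to be a strict total order; $\order_V(i,\cdot)$ identified as the rank/in-degree in that order; then the final count $\vxy\cdot\vyz=\m6$ fed into Lemma~\ref{lemmaXYZ}). The genuinely different part is how you establish transitivity, which is the heart of the theorem. The paper isolates a pointwise statement (Lemma~\ref{lemmaUVW}): for the specific triple $u=\vxy$, $v=\vxz$, $w=\vyz$ it solves for the joint counts $c_{klm}$ using $u^2=v^2=w^2=\m2$, $u\cdot v=v\cdot w=\m3$ and $u\cdot w=\m6$, concluding $c_{101}=0$, i.e.\ $u_i=w_i=1\Rightarrow v_i=1$. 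You instead double-count directed $2$-paths: summing $v_{uv}\cdot v_{vw}=\m6$ over the six orderings of a triple $\{x,y,z\}$ gives $t+3c=m$ (transitive restrictions contribute one $2$-path, cyclic ones contribute three), and $t+c=m$ forces $c=0$. Both arguments are sound; yours is arguably slicker and, notably, never invokes the conditions $\vxy\cdot\vxz=\m3$, so as a by-product it shows those conditions are redundant in the definition of a 3-mwi vector set (they follow from the $\m2$, $0$ and $\m6$ conditions once each $\prec_i$ is known to be a linear order). The paper's lemma, in exchange, gives the implication index-by-index without needing to classify tournaments on three vertices, and makes visible exactly which inner products force it. Your parenthetical about reading the counts in Lemma~\ref{lemmaXYZ} over the index set $[m]$ (in case some $\pi_i$ coincide) addresses a point the paper itself glosses over.
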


%%%%%%%%%%%%%%%%%%%%%%%%%%%%%%%%%%%%%%%%%%%%%%%%%%%%%

\section{Proof of the Theorem \ref{mainTheorem}}

\begin{lemma}\label{lemmaUVW}
	Let $u, v, w \in B^m$ with $u^2=v^2=w^2=\m2, u\cdot v =v\cdot w=\m3$ 
and $u\cdot w=\m6$. Then 
	$$
		\forall i\ u_i=1\wedge w_i=1 \Rightarrow v_i=1.
	$$
\end{lemma}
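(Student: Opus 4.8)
The plan is to prove the statement by a counting argument over the $m$ coordinates. For $a,b,c\in B$ put $n_{abc} := \#\{i\in[m] : u_i=a,\ v_i=b,\ w_i=c\}$; these eight nonnegative integers partition $[m]$, and because $u,v,w$ are $0/1$ vectors each hypothesis becomes a linear equation in the $n_{abc}$. Concretely, the normalization is $\sum_{a,b,c\in B} n_{abc}=m$; the norm conditions $u^2=v^2=w^2=\m2$ read $n_{100}+n_{101}+n_{110}+n_{111}=\m2$ together with its two analogues; and the dot-product conditions read $n_{110}+n_{111}=\m3$, $n_{011}+n_{111}=\m3$, $n_{101}+n_{111}=\m6$. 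The desired implication ``$u_i=1\wedge w_i=1 \Rightarrow v_i=1$'' is exactly the assertion that $n_{101}=0$.

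First I would solve this linear system. It has eight unknowns and seven (independent) equations, so its solution set is one-dimensional; I take $t := n_{111}$ as the free parameter. Eliminating, using only arithmetic identities such as $\m2=\m3+\m6$, I expect every $n_{abc}$ to come out affine in $t$, and in particular the two entries that matter to emerge as $n_{101}=\m6-t$ and $n_{010}=t-\m6$ (with the remaining six cells equal to $t$ or $\m3-t$).

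The conclusion then comes from nonnegativity: since each $n_{abc}$ counts coordinates, $n_{101}\ge 0$ forces $t\le \m6$ while $n_{010}\ge 0$ forces $t\ge \m6$, whence $t=\m6$ and therefore $n_{101}=0$, as claimed. As a byproduct the whole count is pinned down, with $n_{010}=n_{101}=0$ and all six other cells equal to $\m6$; this also recovers $6\mid m$, consistent with $S$ being $3$-mwi, and may be reusable later.

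The only place any care is needed is the elimination step, where the point is to arrange that $n_{101}$ and $n_{010}$ depend on $t$ with opposite signs, so that the two nonnegativity constraints squeeze $t$ to a single value. Beyond setting up the coordinate partition, no combinatorial idea is required; everything else is bookkeeping.
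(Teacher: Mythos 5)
Your proposal is correct and takes essentially the same approach as the paper: both partition the coordinates of $[m]$ by the triple $(u_i,v_i,w_i)$, turn the norm and inner-product hypotheses into linear relations among the eight cell counts, and conclude $n_{101}=0$ from nonnegativity — the paper derives $c_{101}=-c_{010}$ directly, which is exactly the sum of your two relations $n_{101}=m/6-t$ and $n_{010}=t-m/6$. Solving the full system is slightly more work than the paper's targeted elimination, but the squeeze at the end is the same argument.
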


\begin{proof}
	For $k, l, m \in B$, define 
	$c_{klm} := \# \{ i\in[m]: u_i=k, v_i=l, w_i=m \}$.
	Let $c_{kl} := c_{kl0} + c_{kl1}$. Then $v^2=\m2$ implies $c_{01}+c_{11}=\m2$, 
	but $u\cdot v=c_{11}=\m3$ and hence $c_{01}=\m2-\m3=\m6$. 
	Now by subtracting $u\cdot w=c_{101}+c_{111}=\m6$ from $v\cdot w=c_{011}+c_{111}=\m3$ obtain
	$c_{011}-c_{101}=\m6$ or $c_{101}=c_{011}-\m6 = c_{01}-c_{010}-\m6=-c_{010}$, 
	which means that $c_{101}=0$.
\end{proof}

Let set of vectors $V=\{\vxy\}$ be fixed. For a given $i$ define binary relation $\prec_i$ on $[n]$:
	$$
		x \prec_i y := (x \neq y) \wedge ((\vxy)_i = 1).
	$$

                             %%%%%%%%%%%%%%

\begin{lemma} 
	Let $V$ be a 3-mwi set of vectors and $i$ be fixed. Then relation $\prec_i$ defines a liner order on $[n]$.
\end{lemma}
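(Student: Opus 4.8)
The plan is to verify the three defining properties of a linear (total, strict) order: irreflexivity/asymmetry, totality, and transitivity. Fix $i$. By construction $x\prec_i y$ requires $x\neq y$, so irreflexivity is immediate. For asymmetry and totality I would use the relation $v_{xy}\cdot v_{yx}=0$ together with $v_{xy}^2=v_{yx}^2=\m2$: since both vectors are binary of the same length with disjoint supports (their dot product is $0$), for each coordinate $i$ exactly one of $(v_{xy})_i$, $(v_{yx})_i$ equals $1$ — indeed $(v_{xy})_i+(v_{yx})_i\le 1$ from disjoint supports, and the counting identity $v_{xy}^2+v_{yx}^2=m$ forces the sum to be exactly $1$ in every coordinate. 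Hence for distinct $x,y$ exactly one of $x\prec_i y$, $y\prec_i x$ holds, giving both asymmetry and totality in one stroke.

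The substantive part is transitivity: if $x\prec_i y$ and $y\prec_i z$ then $x\prec_i z$, for distinct $x,y,z$. This is exactly where Lemma~\ref{lemmaUVW} enters. I would set $u=v_{xz}$, $v=v_{yz}$... wait — I need the middle vector's coordinate, so I should instead apply the lemma with the roles arranged so that the hypothesis ``$u_i=1\wedge w_i=1\Rightarrow v_i=1$'' delivers the conclusion $(v_{xz})_i=1$. Concretely, take $u=v_{xy}$, $w=v_{yz}$ and $v=v_{xz}$, and check the hypotheses: $u^2=v^2=w^2=\m2$ holds by definition; $u\cdot w=v_{xy}\cdot v_{yz}=\m6$; and $u\cdot v=v_{xy}\cdot v_{xz}=\m3$ while $v\cdot w=v_{xz}\cdot v_{yz}$ — here I must confirm this last equals $\m3$, which it does since $v_{xz}\cdot v_{yz}=v_{zx}$-type reasoning... more carefully, $v_{xz}\cdot v_{yz}$ counts coordinates where $x\prec z$ and $y\prec z$, and by the 3-mwi identity applied appropriately (it is the ``$v\cdot v$-with-common-second-index'' case, symmetric to $v_{xz}\cdot v_{yz}=\m3$) this is $\m3$. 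So if $x\prec_i y$ and $y\prec_i z$, i.e. $(v_{xy})_i=u_i=1$ and $(v_{yz})_i=w_i=1$, the lemma yields $(v_{xz})_i=v_i=1$, i.e. $x\prec_i z$.

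The main obstacle I anticipate is bookkeeping: getting the index assignments in Lemma~\ref{lemmaUVW} exactly right, since the dot-product identities $v_{ab}\cdot v_{cd}$ are not symmetric under permuting $\{a,b,c,d\}$ and one must match ``$\m3$'' versus ``$\m6$'' to the correct pairs. I would double-check by recalling that $v_{ab}\cdot v_{ac}$ (shared \emph{first} index) and $v_{ab}\cdot v_{cb}$ (shared \emph{second} index, via complementation $v_{cb}=\mathbf 1-v_{bc}$) both evaluate to $\m3$, whereas $v_{ab}\cdot v_{bc}$ (the ``chain'' pattern) is $\m6$; transitivity corresponds precisely to the chain pattern in the hypothesis collapsing to force the value on the ``shortcut'' pair $v_{ac}$. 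Once the three order axioms are established, $\prec_i$ is a strict linear order on the finite set $[n]$, completing the proof.
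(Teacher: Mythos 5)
Your proposal is correct and follows essentially the same route as the paper: anti-symmetry (and totality) from $\vxy\cdot\vyx=0$ together with $\vxy^2=\vyx^2=\m2$, and transitivity via Lemma~\ref{lemmaUVW} with exactly the assignment $u=\vxy$, $v=\vxz$, $w=\vyz$. You are in fact slightly more careful than the paper in one spot: the paper asserts $\vxz\cdot\vyz=\m3$ as if it were part of the definition, whereas you correctly note it must be derived via the complementation $v_{yz}=\mathbf{1}-v_{zy}$ and the identity $\vxz\cdot v_{zy}=\m6$.
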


\begin{proof}
	Let $x, y\in [n]$ be distinct. Now $v_{xy}\cdot v_{yx}=0$ together with $\vxy^2=\vyx^2=\m2$ implies $(\vxy)_i=1-(\vyx)_i$,
	which gives $x \prec_i y \iff \neg (y \prec_i x)$, i.e. relation $\prec_i$ is anti-symmetric.

	Let $x, y, z\in [n]$ be distinct and let $x \prec_i y$ and $y \prec_i z$.   
	Put $u=\vxy, v=\vxz$, and  $w=\vyz$. By assumption $V$ is 3-mwi which implies
		$u^2=v^2=w^2=\m2$, 
		$u\cdot v=v\cdot w =\m3$, and
		$u\cdot w=\m6$.
	By definition, $x \prec_i y$ implies $u_i=1$, $y \prec_i z$ implies $w_i=1$
	and hence Lemma \ref{lemmaUVW} states that $v_i=1$, i.e. $x \prec_i z$. This means that $\prec_i$ is transitive.
\end{proof}

                             %%%%%%%%%%%%%%

\begin{lemma}\label{lemma4}
	Let $V$ be a 3-mwi set of vectors and $i\in[m]$ be fixed. Then
	$$
		x\prec_i y \iff \order_V(i, x) < \order_V(i,y).
	$$
\end{lemma}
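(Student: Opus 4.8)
The plan is to reduce the statement to an elementary fact about finite linear orders, using the preceding lemma that $\prec_i$ is a linear order on $[n]$.

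First I would rewrite $\order_V(i,x)$ purely in terms of $\prec_i$. Recall from the proof that $\prec_i$ is a linear order that $v_{xy}\cdot v_{yx}=0$ together with $\vxy^2=\vyx^2=\m2$ forces $(\vxy)_i = 1-(\vyx)_i$. Hence $(\vxy)_i = 0 \iff (\vyx)_i = 1 \iff y\prec_i x$, and \eqref{orderDefinition} becomes
$$
	\order_V(i,x) = \#\{ y : y\neq x,\ y\prec_i x\},
$$
that is, $\order_V(i,x)$ counts exactly the elements of $[n]$ lying strictly below $x$ in the linear order $\prec_i$.

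For the forward implication, suppose $x\prec_i y$. By transitivity of $\prec_i$, every $z$ with $z\prec_i x$ also satisfies $z\prec_i y$, so $\{z: z\prec_i x\}\subseteq\{z: z\prec_i y\}$; moreover $x$ lies in the right-hand set but, by anti-symmetry, not in the left-hand set, so the inclusion is strict and $\order_V(i,x) < \order_V(i,y)$. For the reverse implication I would use totality: assume $\order_V(i,x) < \order_V(i,y)$. Then $x\neq y$ (otherwise the two counts coincide), and $y\prec_i x$ is impossible, since by the forward implication it would give $\order_V(i,y) < \order_V(i,x)$; as $\prec_i$ is a linear (hence total) order, the only remaining possibility is $x\prec_i y$.

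I do not expect a genuine obstacle here; the only step requiring care is the first one — correctly identifying the event $(\vxy)_i = 0$ with $y\prec_i x$ — which rests entirely on the anti-symmetry of $\prec_i$ already established. Everything after that is the standard observation that the ``number of elements below'' function is a strictly order-preserving bijection from a finite linearly ordered set onto an initial segment of the integers.
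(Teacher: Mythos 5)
Your proof is correct and follows essentially the same route as the paper: both rewrite $\order_V(i,x)$ as the number of elements strictly below $x$ under $\prec_i$, obtain the forward implication from transitivity plus the strict inclusion witnessed by $x$ itself, and dispose of the converse by totality. You are somewhat more explicit than the paper in justifying the rewriting step via $(\vxy)_i = 1-(\vyx)_i$, which the paper leaves implicit, but there is no substantive difference.
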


\begin{proof}
	Define $f(u) := \{v\in [n]: v\prec_i u\}$ and rewrite (\ref{orderDefinition}): $\order_V(i, u) = \#f(u)$. 
	Now $x\prec_i y$ implies 
		$\forall z (z\prec_i x \Rightarrow z\prec_i y)$ 
	and hence
		$f(x)\subset f(y)$. 
	So
		$\order_V(i, y)=\#f(y)=\#f(x)+\#(f(y)\setminus f(x)) > \order_V(i, x)$
	since
		$x\in f(y)\setminus f(x)$.
	This provides left-to-right implication and another one is symmetric.
\end{proof}

                             %%%%%%%%%%%%%%

	Now we are ready to prove Theorem \ref{mainTheorem}. 
	Let $V$ be a 3-mwi vector set. 
	First of all, Lemma \ref{lemma4} states that functions $\pi_i$ defined in (\ref{piDefinition}) are indeed permutations.
		For $u,v\in [n]$ and $i\in[m]$ define
	$
		f_i(u, v) := 
			\{i: \pi_i(u) < \pi_i(v)\}
			= \{i: \order_V(i, u) < \order_V(i, v)\}
			= \{i: x\prec_i y\} = \{i: (v_{uv})_i = 1\}.
	$
	Now let $x,y,z\in[n]$ be distinct:
	$
		\#\{i : \pi_i(x) < \pi_i(y) < \pi_i(z)\} 
		= \#(f_i(x,y)\cap f_i(y,z)\}
		= \#\{i:  (\vxy)_i = 1\wedge (\vyz)_i = 1\} 
		= \vxy\cdot\vyz = \m6,
	$
	 so Lemma \ref{lemmaXYZ} provides that $S=\{\pi_i\}$ is indeed 3-mwi permutations set.

%%%%%%%%%%%%%%%%%%%%%%%%%%%%%%%%%%%%%%%%%%%%%%%%%%%%%

\section{Usage Example}

We will use the Theorem \ref{mainTheorem} to prove that the maximal order of 3-mwi permuations set of size 6 is 4 ($m=6\Rightarrow n\leq 4$). 

Suppose $n=5$, $S\subset S_n$ is 3-mwi and $V$ is a corresponding vector set. Without loss of generality we may
assume that
	$$	
		v_{01} = 000111,\ 
		v_{02} = 001011,\
		v_{12} = 011001.
	$$
Let $f(v,d):=\{u\in B^m: u^2=\m2, u\cdot v=d\}$. Then
	$$
		v_{03},v_{04} \in V_0 := f(v_{01},\m3)\cap f(v_{02},\m3)  = \{010011, 100011, 001101, 001110\}.
	$$
	$$
		v_{13},v_{14} \in V_1 := f(v_{01},\m6)\cap f(v_{12},\m3)  = \{101001, 110001, 011010, 011100\}.
	$$
	$$
		v_{23},v_{24} \in V_2 := f(v_{02},\m6)\cap f(v_{12},\m6)  = \{100101, 101100, 010110, 110010\}.
	$$

Let $v_{03} = 010011$. Then 
	$v_{13}\in V_1\cap f(v_{03},\m3)=\{110001, 011010\}$
and
	$v_{23}\in V_2\cap f(v_{03},\m3)=\{010110, 110010\}$.
Besides 
	$v_{04}\in V_0\cap f(v_{03},\m3)$ 
		implies 
	$v_{04}=100011$, 
	$v_{14}\in V_1\cap f(v_{04},\m3)=\{101001, 110001\}$
and
	$v_{24}\in V_2\cap f(v_{04},\m3)=\{100101, 110010\}$.
Now $v_{13}\cdot v_{14}=\m3$ implies $v_{14}=101001$, which implies $v_{24}=100101$ and $v_{13}=110001$, which 
implies $v_{23}=110010$. But $v_{23}\cdot v_{24}\neq\m3$ which contradicts the assumption.

	Similarly $v_{03} = 001101$ implies 
		$v_{04} = 001110$,
		$v_{13} = \{101001, 011100\}$, 
		$v_{23} = \{100101, 101100\}$, 
		$v_{14} = \{011010, 011100\}$ and 
		$v_{24} = \{101100, 010110\}$.
	Now 
		$v_{13}\cdot v_{14}=\m3$ 
	implies 
		$v_{13} = 011100$, $v_{14}=011010$, $v_{23}=101100$, $v_{24}=010110$
	and 
		$v_{23}\cdot v_{24}\neq \m6$.

Cases $v_{03} = 100011$ and $v_{03} = 001110$  imply $v_{04} = 010011$ and $v_{04} = 001101$ correspondingly, 
so the above deduction can be applied with $3$ and $4$ interchanged. This completes the proof for $n=5$. 

For $n=4$ let us take $v_{03} = 010011\in V_0$, $v_{13} = 110001\in V_1 $, $v_{23} = 110010\in V_2$, and check
	that 
		$v_{03}\cdot v_{13}=v_{03}\cdot v_{23}=v_{13}\cdot v_{23}=\m3$,
	which is suffient for a resulting vectors set to be 3-mwi. 
	%%%%%%%%%
	Let us use (\ref{orderDefinition}) to restore the permuations set: 
		$\pi_0(0) = n-1 - (v_{01})_0 - (v_{02})_0 - (v_{03})_0 = 3$,
		$\pi_0(1) = n-1 - (1-v_{01})_0 - (v_{12})_0 - (v_{13})_0 = 1$,
		$\pi_0(2) = n-1 - (1-v_{02})_0 - (1-v_{12})_0 - (v_{23})_0 = 0$ and
		$\pi_0(3) = n-1 - (1-v_{03})_0 - (1-v_{13})_0 - (1-v_{23})_0 = 2$,
	so
		$\pi_0 = \underline{3102}$. Continuing doing that obtain 3-mwi permutations set 
	$S=\{\underline{3102}, \underline{2013}, \underline{2130}, \underline{2310}, \underline{0312}, \underline{0132}\}$.

\end{document}